\renewcommand\MR[1]{\relax} % Must follow amsrefs
\newtheorem{thm}{Theorem}[section]
\numberwithin{equation}{section}
\newtheorem{cor}[thm]{Corollary}
\newtheorem{lemma}[thm]{Lemma}
\newtheorem{prop}[thm]{Proposition}
\theoremstyle{definition}
\theoremstyle{remark}
\newtheorem{remark}[thm]{Remark}
\newtheorem{question}[thm]{Question}
\newtheorem{mycomment}[thm]{Comment}
{\end{mycomment}\endgroup}
\def\mathcs{C^{*}}
\newcommand{\cs}{\ensuremath{\mathcs}}
\DeclareMathSymbol{\rtimes}{\mathbin}{AMSb}{"6F}
\def\T{\mathbf{T}}
\DeclareMathOperator*{\supp}{supp}
\def\set#1{\{\,#1\,\}}
\newcommand\sset[1]{\{#1\}}
\def\restr#1{|_{{#1}}}
\def\labelenumi{\textnormal{(\@alph\c@enumi)}}
\def\theenumi{\@alph \c@enumi}
\def\labelenumii{\textnormal{(\@roman\c@enumii)}}
\def\theenumii{\@roman \c@enumii}
\def\alphapart#1{\charno=96
\advance\charno by#1\char\charno}
\def\<{\langle}
\def\>{\rangle}
\let\ipscriptstyle=\scriptscriptstyle
\def\lipsqueeze{{\mskip -3.0mu}}
\def\ripsqueeze{{\mskip -3.0mu}}
\def\ipcomma{\nobreak\mathrel{,}\nobreak}
\newbox\ipstrutbox
\def\ipstrut{\copy\ipstrutbox}
\def\lip#1<#2,#3>{\mathopen{\relax_{\ipstrut\ipscriptstyle{
#1}}\lipsqueeze
\langle} #2\ipcomma #3 \rangle}
\def\blip#1<#2,#3>{\mathopen{\relax_{\ipstrut
\ipscriptstyle{ #1}}\lipsqueeze\bigl\langle} #2\ipcomma #3 \bigr\rangle}
\def\rip#1<#2,#3>{\langle #2\ipcomma #3
\rangle_{\ripsqueeze\ipstrut\ipscriptstyle{#1}}}
\def\brip#1<#2,#3>{\bigl\langle #2\ipcomma #3
\bigr\rangle_{\ripsqueeze\ipstrut\ipscriptstyle{#1}}}
\def\angsqueeze{\mskip -6mu}
\def\smangsqueeze{\mskip -3.7mu}
\def\trip#1<#2,#3>{\langle\smangsqueeze\langle #2\ipcomma #3
\rangle\smangsqueeze\rangle_{\ripsqueeze\ipstrut\ipscriptstyle{#1}}}
\def\btrip#1<#2,#3>{\bigl\langle\angsqueeze\bigl\langle #2\ipcomma
#3
\bigr\rangle
\angsqueeze\bigr\rangle_{\ripsqueeze\ipstrut\ipscriptstyle{#1}}}
\def\tlip#1<#2,#3>{\mathopen{\relax_{\ipstrut\ipscriptstyle{
#1}}\lipsqueeze \langle\smangsqueeze\langle} #2\ipcomma #3
\rangle\smangsqueeze\rangle}
\def\btlip#1<#2,#3>{\mathopen{\relax_{\ipstrut\ipscriptstyle{
#1}}\lipsqueeze
\bigl\langle\angsqueeze\bigl\langle} #2\ipcomma #3
\bigr\rangle\angsqueeze\bigr\rangle}
\def\ip(#1|#2){(#1\mid #2)}
\def\bip(#1|#2){\bigl(#1 \mid #2\bigr)}
\def\Bip(#1|#2){\Bigl( #1 \bigm| #2 \Bigr)}
\newcommand\go{G^{(0)}} %% Unit spaces
\def\g[#1,#2]{{}_{G}[#1,#2]} %% Groupoid maps on equivalences
\def\h[#1,#2]{[#1,#2]_{H}}
\let\phi\varphi
\newcommand\zop{Z^{\text{op}}}
\def\timeofday{%    Must be computed when called if preloaded
\hours=\time
\minutes=\hours
\divide\hours by60
\multiply\hours by60
\advance\minutes by-\hours
\divide\hours by60
\ifnum\hours>9\else0\fi\the\hours:\ifnum\minutes>9\else
0\fi\the\minutes}
\def\predate{\date{\the\day\ \ifcase\month\or
  January\or February\or March\or April\or May\or June\or July\or
        August\or September\or October\or November\or
           December\fi\ \the\year\ --- \timeofday\ --- Preliminary
                  Version}}
\definecolor{Dgreen}{cmyk}{0.93,0.33,0.92,0.25} %% Dartmouth Green!
\subjclass{Primary 22A22; Secondary 28C15, 46L55, 46L05}
\keywords{Groupoid, equivalent groupoids, Haar system, imprimitivity
  groupoid, $\pi$-system, proper action}
\begin{document}

\title{\boldmath Haar Systems on Equivalent
Groupoids}
\author{Dana P. Williams} 
\address{Department of Mathematics \\ Dartmouth College \\ Hanover, NH
03755 \\ USA}
\email{dana.williams@dartmouth.edu}

\thanks{This work was supported in part by a grant from the Simons
  Foundation.}  

\date{15 January 2015}

\begin{abstract}
% The main result is that if
%  $G$ and $H$ are equivalent second countable locally compact
% Hausdorff groupoids such that $G$ has a Haar system, then $H$ has a
% Haar system.
For second countable locally compact Hausdorff groupoids, the
property of possessing a Haar system is preserved by equivalence.
\end{abstract}
\maketitle

\section{Introduction}
\label{sec:introduction}

Beginning with the publication of Renault's seminal paper
\cite{ren:groupoid}, locally compact groupoids have been an especially
important way to construct operator algebras.  Just as with the time
honored group \cs-algebra construction, this is done by turning
$C_{c}(G)$ into a convolution algebra and then completing.  In the group
case, there is always a (left) Haar measure on $G$ which allows us to
form the convolution product.  In the groupoid case, the natural
convolution formula requires a family of measures $\lambda^{u}$ with
support $G^{u}=\set{x\in G:r(x)=u}$ for each $u\in\go$.  We want the
family to be left-invariant in that $x\cdot \lambda^{s(x)} =
\lambda^{r(x)}$ where $x\cdot
\lambda^{s(x)}(E)=\lambda^{s(x)}(x^{-1}E)$.  In order that the
convolution formula return a continuous function, we need the
continuity condition that
\begin{equation*}
  u\mapsto \int_{G}f(x)\,d\lambda^{u}(x)
\end{equation*}
be continuous for all $f\in C_{c}(G)$ (the necessity is the main
result in \cite{sed:pams86}).  Such a family
$\sset{\lambda^{u}}_{u\in\go}$ is called a (continuous) \emph{Haar
  system} for $G$.  An annoying gap in the theory is that there is no
theorem guaranteeing Haar systems exist.  The only significant
positive existence result I am aware of is that if $\go$ is open in
$G$ and the range map is open (and hence the source map as well), then
the family consisting of counting measures is always a Haar
system. Groupoids with $\go$ open and for which the range map is open
are called \emph{\'etale}.  (It is also true that Lie Groupoids
necessarily have Haar systems \cite{pat:groupoids99}*{Theorem~2.3.1},
but this result is crucially dependent on the manifold structure and
hence not in the spirit of this note.)  It is also well known that if
$G$ is any locally compact groupoid with a Haar system, then its range
and source maps must be open.  (This is a consequence of
Remark~\ref{rem-haar-r-sys} and Lemma~\ref{lem-pi-sys-open}.)  Thus if
a locally compact groupoid has a range map which is not open, then it
can't possess a Haar system.  Such groupoids do exist; for example,
see \cite{sed:pams86}*{\S3}.  However, to the best of my knowledge,
there is no example of a locally compact groupoid with open range and
source maps which does not possess a Haar system. I have yet to find
an expert willing to conjecture (even off the record) that all such
groupoids need have Haar systems, but the question remains open.

The purpose of this note is to provide some additional examples where
Haar systems must exist.  The main result being that if $G$ and $H$
are equivalent second countable locally compact groupoids (as defined
in \cite{mrw:jot87} for example), and if $G$
has a Haar system, then so does $H$.  Since equivalence is such a
powerful tool, this result gives the existence of Haar systems on a
great number of interesting groupoids.  For example, every transitive
groupoid with open range and source maps has a Haar system
(Proposition~\ref{prop-trans}).  

The proof given here depends on several significant results from the
literature.  The first is that if $\pi:Y\to X$ is a continuous, open
surjection with $Y$ second countable, then there is a family of Radon
measures $\sset{\beta^{x}}$ on $Y$ such that $\supp
\beta^{x}=\pi^{-1}(x)$ and
\begin{equation*}
  x\mapsto \int_{Y} f(y)\,d\beta^{x}(y)
\end{equation*}
is continuous for all $f\in C_{c}(Y)$.  (This result is due to
Blanchard who makes use of a Theorem of Michael's \cite{mic:am56}.) 
The second is the
characterization in \cite{kmrw:ajm98}*{Proposition~5.2} of when the
imprimitivity groupoid of free and proper $G$-space has a Haar
system.  The third is the concept of a Bruhat section or cut-off
function.  These are used in
\cite{bou:integrationII04}*{Chapter~7} to construct invariant
measures.  They also appear prominently in
 \cite{ren:xx14}*{Lemma~25} and \cite{tu:doc04}*{\S6}.

Since Blanchard's result requires separability, we can only consider
second countable groupoids here.

I would like to thank Marius Ionescu, Paul Muhly, Erik van Erp, Aidan
Sims, and especially Jean Renault for helpful comments and
discussions.

\section{The Theorem}
\label{sec:main-theorem}

\begin{thm}
  \label{thm-main}
  Suppose that $G$ is a second countable, locally compact Hausdorff
  groupoid with a Haar system $\sset{\lambda^{u}}_{u\in\go}$.  If $H$ 
  is a second countable, locally compact groupoid which is equivalent
  to $G$, then $H$ has a Haar system.
\end{thm}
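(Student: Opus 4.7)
The plan is to use a $(G,H)$-equivalence $Z$ (in the sense of \cite{mrw:jot87}) as a bridge. By definition $Z$ is a second countable locally compact Hausdorff space carrying commuting free and proper actions of $G$ on the left and $H$ on the right, with open continuous momentum maps $\rho:Z\to\go$ and $\sigma:Z\to\ho$; the $G$-orbits in $Z$ are precisely the fibers of $\sigma$, and $H$ is canonically isomorphic to the imprimitivity groupoid $(Z *_\rho Z)/G$. By \cite{kmrw:ajm98}*{Proposition~5.2}, the latter has a Haar system as soon as the free and proper left $G$-space $Z$ admits a continuous $G$-invariant $\sigma$-system: a continuous family of Radon measures $\sset{\alpha^u}_{u\in\ho}$ with $\supp\alpha^u=\sigma^{-1}(u)$ that is invariant under the $G$-action. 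The whole theorem therefore reduces to producing such a family on $Z$.

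To do so, I would combine Blanchard's theorem with a Bruhat cut-off argument. First, since $Z$ is second countable and $\sigma$ is a continuous open surjection, Blanchard's theorem supplies a continuous $\sigma$-system $\sset{\beta^u}$, which a priori need not be $G$-invariant. Next, because the left $G$-action on $Z$ is free and proper, the standard construction (used, e.g., in \cite{bou:integrationII04}*{Chapter~7}, \cite{ren:xx14}*{Lemma~25}, and \cite{tu:doc04}*{\S6}) yields a Bruhat section: a nonnegative $b\in C_c(Z)$ satisfying, up to conventions, a normalization of the form $\int_G b(x\cdot z)\,d\lambda^{\rho(z)}(x)=1$ for every $z\in Z$. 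Averaging $\beta^u$ against the Haar system using $b$ then produces, for $f\in C_c(Z)$,
\begin{equation*}
  \int_Z f\,d\alpha^u=\int_Z\int_G f(x\cdot z)\,b(z)\,d\lambda^{\rho(z)}(x)\,d\beta^u(z).
\end{equation*}
Left-invariance of $\sset{\lambda^u}$ combined with the defining property of $b$ identifies this with the canonical $G$-invariant push-forward of $\lambda^{\rho(z_0)}$ via $x\mapsto x\cdot z_0$ for any $z_0\in\sigma^{-1}(u)$, an object that is orbit-invariant and supported on the single orbit $\sigma^{-1}(u)$. Continuity of $u\mapsto\int f\,d\alpha^u$ is inherited from the $\pi$-system property of $\sset{\beta^u}$ and from continuity of $\sset{\lambda^u}$.

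The step I expect to be the main obstacle is the careful verification that the formula above really defines a family with the right support, $G$-invariance, and continuity properties. This requires Fubini-type interchanges justified by properness of the $G$-action and by the compact supports of $b$ and of test functions, as well as checking that $\supp\alpha^u=\sigma^{-1}(u)$ (which follows from strict positivity of the Bruhat section on a set meeting every orbit). Once this is in place, \cite{kmrw:ajm98}*{Proposition~5.2} produces a Haar system on $(Z *_\rho Z)/G$, and transporting along the canonical isomorphism with $H$ finishes the proof.
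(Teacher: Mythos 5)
Your overall strategy --- pass to the imprimitivity groupoid via \cite{kmrw:ajm98}*{Proposition~5.2} and then manufacture the required system by averaging a Blanchard system against the Haar system with a Bruhat cut-off --- is exactly the paper's. But you have aimed the construction at the wrong fibration, and this is a genuine error rather than a notational slip. Proposition~5.2 of \cite{kmrw:ajm98} requires a full \emph{$G$-equivariant system for the momentum map} $\rho:Z\to\go$: measures $\nu^{u}$ with $\supp\nu^{u}=\rho^{-1}(u)$ for $u\in\go$, satisfying $g\cdot\nu^{s(g)}=\nu^{r(g)}$. This is forced by what a Haar system on $H\cong G\backslash(Z*_{\rho}Z)$ must be: for $v\in\ho$ and $z\in\sigma^{-1}(v)$, the fibre $H^{v}$ is identified with $\rho^{-1}(\rho(z))$ via $h\mapsto z\cdot h$, so the measures must live on $\rho$-fibres (equivalently, on $H$-orbits), and well-definedness under the change of representative $z\mapsto g\cdot z$ is precisely equivariance. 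What you construct instead is a family $\alpha^{u}$, $u\in\ho$, supported on the $\sigma$-fibres --- that is, on the $G$-orbits --- and invariant under $G$. There is no natural identification of $H^{v}$ with the $G$-orbit $\sigma^{-1}(v)$, so these measures cannot be transported to a Haar system on $H$; a $G$-invariant measure on each orbit is the content of the paper's Corollary~\ref{cor-jean-group} (invariant measures for proper actions), not of the theorem. The shape of your formula betrays the problem: with the $\beta^{u}$-integral outermost and $x$ ranging over a full fibre of $G$, the resulting measure necessarily spreads over an entire orbit rather than sitting on a single $\rho$-fibre.

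The fix keeps every one of your ingredients but re-aims them, and is the paper's Proposition~\ref{prop-jean-proper-map}. Apply Blanchard to the continuous open surjection $\rho:Z\to\go$ to get a full, not yet equivariant, $\rho$-system $\sset{\beta^{u}}_{u\in\go}$. Take the cut-off $\phi$ for the orbit map $q:Z\to G\backslash Z$; note that $\phi$ must be allowed to have merely $q$-compact (not compact) support, since a $b\in C_{c}(Z)$ whose $G$-average is identically $1$ can exist only when $G\backslash Z$ is compact --- and also beware that $b(x\cdot z)$ requires $s(x)=\rho(z)$ while $\lambda^{\rho(z)}$ is carried by $\sset{x:r(x)=\rho(z)}$, so your normalization as written does not parse. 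Then set
\begin{equation*}
  \nu^{u}(f)=\int_{G}\int_{Z}f(g\cdot z)\,\phi(z)\,d\beta^{s(g)}(z)\,d\lambda^{u}(g),
\end{equation*}
with the $\lambda^{u}$-integral outermost so that every point $g\cdot z$ appearing lands in $\rho^{-1}(r(g))=\rho^{-1}(u)$. Left invariance of $\lambda$ gives equivariance, positivity of $\phi$ on a set meeting every orbit gives full support, and properness justifies that the inner integral defines an element of $C_{c}(G)$. With this corrected target, \cite{kmrw:ajm98}*{Proposition~5.2} applies and the theorem follows.
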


As in \cite{ren:jot87}*{p.~69} or \cite{anaren:amenable00}*{Definition~1.1.1},
if $\pi:Y\to X$ is a continuous map between locally compact spaces $Y$
and $X$, then a \emph{$\pi$-system} is a family of (positive) Radon
measures $\set{\beta^{x}:x\in X}$ on $Y$ such that $\supp \beta^{x}\subset \pi^{-1}(x)$ and for every $f\in C_{c}(Y)$, the function
  \begin{equation*}
    x\mapsto \int_{Y}f(y)\,d\beta^{x}(y)
  \end{equation*}
is continuous.
We say that $\beta$ is \emph{full} if $\supp \beta^{x}=\pi^{-1}(x)$
for all $x\in X$.  

If $Y$ and $X$ are both (left) $G$-spaces and $\pi$ is
equivariant, then we say $\beta$ is \emph{equivariant} if $\gamma\cdot
\beta^{x} = \beta^{\gamma\cdot x}$ where $\gamma\cdot
\beta^{x}(E)=\beta^{x}(\gamma^{-1}\cdot E)$ for all $(\gamma,x)\in
G*X=\set{(\gamma,x):s(\gamma)=r(x)}$.  Alternatively,
\begin{equation*}
  \int_{Y}f(\gamma\cdot y)\,d\beta^{x}(y)=\int_{Y}
  f(y)\,d\beta^{\gamma\cdot x}(y)
\end{equation*}
for all $f\in C_{c}(Y)$ and  $(\gamma,x)\in G*X$.

\begin{remark}
  \label{rem-haar-r-sys}
  It is useful to keep in mind that a
 Haar system on $G$ is a full, equivariant
$r$-system on $G$ for the range map $r:G\to\go$.
\end{remark}

In many cases, such as \cite{kmrw:ajm98}*{\S5}, $\pi$-systems are
reserved for continuous \emph{and open} maps $\pi:Y\to X$.  In the
case of full systems, the next lemma implies that there is no loss in
generality. (This part of the result does not require second
countability.)  Conversely, if $Y$ is second countable $\pi$ is an
open surjection, then Blanchard has proved that full $\pi$ systems
must exist.  Blanchard's result will be crucial in the proof of the
main result.

\begin{lemma}[Blanchard]
  \label{lem-pi-sys-open}
  Suppose that $\pi:Y\to X$ is a continuous surjection between second
  countable locally compact Hausdorff spaces.  Then $\pi$ is
  open if and only if it admits a full $\pi$ system.
\end{lemma}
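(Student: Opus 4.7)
The plan is to treat the two implications separately: the backward direction (full $\pi$-system implies $\pi$ open) is an elementary support argument not needing second countability, while the forward direction is essentially the content of Blanchard's theorem and will be invoked through Michael's selection theorem.

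For the backward direction, suppose $\sset{\beta^{x}}_{x\in X}$ is a full $\pi$-system and let $U\subset Y$ be open. To show $\pi(U)$ is open, fix $x_{0}\in \pi(U)$. Since $\supp \beta^{x_{0}}=\pi^{-1}(x_{0})$ meets $U$, the definition of support produces a nonnegative $f\in C_{c}(Y)$ with $\supp f\subset U$ and $\int_{Y}f\,d\beta^{x_{0}}>0$. The continuity built into a $\pi$-system then supplies a neighborhood $V$ of $x_{0}$ on which $x\mapsto \int_{Y}f\,d\beta^{x}$ remains strictly positive, so for each $x\in V$ we have $\supp f\cap \supp \beta^{x}\neq \emptyset$, forcing $U\cap \pi^{-1}(x)\neq \emptyset$ and hence $x\in \pi(U)$. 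This gives $V\subset \pi(U)$, which is openness.

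For the forward direction, assume $\pi$ is open and both spaces are second countable. The strategy is to invoke Blanchard's construction (cited in the introduction), which rests on Michael's selection theorem. Openness of $\pi$ is precisely the condition that the fiber map $x\mapsto \pi^{-1}(x)$ be lower semicontinuous as a closed-set-valued map into $Y$; paracompactness and second countability of $X$ then let Michael's theorem produce local continuous selections of compactly supported Radon measures supported on fibers. Patching these via a countable partition of unity on $X$ yields a $\pi$-system $\sset{\mu^{x}}$ with $\supp \mu^{x}\subset \pi^{-1}(x)$.

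The main obstacle is upgrading $\supp \mu^{x}\subset \pi^{-1}(x)$ to the equality $\supp \beta^{x}=\pi^{-1}(x)$ required for fullness. I would handle this by fixing a countable basis $\sset{V_{n}}$ for the topology of $Y$, repeating the selection construction relative to each $V_{n}$ (arranging that the resulting $\pi$-system $\mu_{n}$ places positive mass in $V_{n}$ whenever $V_{n}\cap \pi^{-1}(x)\neq \emptyset$, which is again possible thanks to openness of $\pi$), and summing $\beta^{x}=\sum_{n}2^{-n}c_{n}\mu_{n}^{x}$ with normalization constants $c_{n}$ chosen to keep each $f\in C_{c}(Y)$-integral convergent and continuous. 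The resulting $\beta$ meets every basic open set that meets a fiber, which is exactly fullness, and continuity and support control are inherited from the dominated tail of the series.
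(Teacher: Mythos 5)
Your backward direction (full $\pi$-system $\Rightarrow$ open) is correct and takes a slightly different route from the paper. The paper verifies openness via the net-lifting criterion (given $x_i\to\pi(y)$, it builds a directed set of pairs $(i,V)$ and extracts a subnet of lifts converging to $y$), whereas you argue directly that $\pi(U)$ is a neighborhood of each of its points by choosing $f\in C_c^+(Y)$ with $\supp f\subset U$ and $\int f\,d\beta^{x_0}>0$ and using continuity of $x\mapsto\int f\,d\beta^x$. The two arguments are equivalent in substance --- both rest on fullness of the support at the base point plus continuity of the system --- but yours is arguably more economical; one small imprecision is that from $\supp f\cap\supp\beta^x\neq\emptyset$ alone you cannot conclude $U\cap\pi^{-1}(x)\neq\emptyset$ (points of $\supp f$ need not lie where $f>0$); the correct deduction is that $\int f\,d\beta^x>0$ forces $\beta^x(\{f>0\})>0$ and $\{f>0\}\subset U$.

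For the forward direction the paper does not give a proof at all: it cites Blanchard's Proposition~3.9 and explicitly calls it ``much more subtle.'' You likewise invoke Blanchard, so in that sense your treatment matches the paper's; but you should not present your sketch as if it were a proof. Two steps in it are exactly where the difficulty lives. First, applying Michael's selection theorem requires identifying a completely metrizable convex-valued lower semicontinuous carrier inside a suitable locally convex space; this is where second countability of $Y$ (separability of $C_0(Y)$, hence metrizability of bounded sets of measures in the weak-$*$ topology) enters, not merely paracompactness of $X$. Second, your fullness upgrade via a countable basis $\{V_n\}$ needs systems $\mu_n$ defined and continuous on \emph{all} of $X$ with $\mu_n^x(V_n)>0$ for every $x\in\pi(V_n)$; producing these from the restricted maps $\pi|_{V_n}\colon V_n\to\pi(V_n)$ runs into continuity problems both at the boundary of the open set $\pi(V_n)$ and against test functions in $C_c(Y)$ that are not compactly supported in $V_n$. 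These can be repaired (cut-off functions vanishing on $\partial\pi(V_n)$, shrinking to relatively compact basic sets), but as written the sketch leaves the genuinely hard content to the citation --- which is also what the paper does, so this is acceptable only if you, like the author, are treating this implication as Blanchard's theorem rather than proving it.
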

\begin{proof}
  Suppose that  $\beta$ is a full $\pi$ system.  To show that $\pi$ is
  open, we appeal to the usual lifting argument as in
  \cite{wil:crossed}*{Proposition~1.15}.  Thus we assume that
  $x_{i}\to \pi(y)$ is a convergent net.  It will suffice to produce a
  subnet $\sset{x_{j}}_{j\in J}$ and elements $y_{j}\in \pi^{-1}(x_{j})$ such
  that $y_{j}\to y$.

To this end, let
\begin{equation*}
  J=\set{(i,V):\text{$V$ is an open neighborhood of $y$ and
      $\pi^{-1}(x_{i})\cap V\not=\emptyset$}}.
\end{equation*}
We need to see that $J$ is directed in the expected way: $(i,V)\ge
(j,U)$ if $i\ge j$ and $V\subset U$.  So let $(k,V)$ and $(j, U)$ be
in $J$.  Let $f\in C_{c}^{+}(G)$ be such that $f(y)=1$ and $\supp
f\subset V\cap U$.  Then
\begin{equation*}
  \beta(f)(x_{i})=\int_{Y} f(y)\,d\beta^{x_{i}}(y) \to \int_{Y} f(y)
  \, \beta^{\pi(y)}(y)=\beta(f)(\pi(y)).
\end{equation*}
The latter is nonzero since $\beta^{\pi(y)}$ has full support.  Hence
there is a $k\ge i$ and $k\ge j$ such that
\begin{equation*}
  \int_{Y}f(y)\,d\beta^{x_{i}}(y)\not=0.
\end{equation*}
It follows that $(i,U\cap V)\in J$.  

Thus if we let $x_{(i,V)}=x_{i}$, then $\sset{x_{(i,V)}}_{(i,V)\in J}$
is a subnet.  If we let $y_{(i,V)}$ be any element of
$\pi^{-1}(x_{i})\cap V$, then $y_{(i,V)}\to y$.  This suffices.

  The converse is much more subtle, and is due to Blanchard
  \cite{bla:bsmf96}*{Proposition~3.9}. 
\end{proof}

We also need what is sometimes called a \emph{Bruhat section} or
\emph{cut-off} function for $\pi$.  The construction is modeled after
Lemma~1 in Appendix~I for \cite{bou:integrationII04}*{Chapter 7}.
Recall that a subset $A\subset Y$ is called $\pi$-compact if $A\cap
\pi^{-1}(K)$ is compact whenever $K$ is compact in $X$.  We write
$C_{c,\pi}(Y)$ for the set of continuous functions on $Y$ with
$\pi$-compact support.

\begin{lemma}
  \label{lem-phi-exists}
  Let $\pi:Y\to X$ be a continuous open surjection between second
  countable locally compact Hausdorff spaces.  Then there is a
  $\phi\in C_{c,\pi}^{+}(Y)$ such that $\pi\bigl(\set{y\in Y:\phi(y)>0}
\bigr) = X$.
\end{lemma}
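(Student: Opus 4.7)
The plan is to build $\phi$ as a locally finite sum $\sum_n f_n$ of non-negative bump functions, one per ``annulus'' in a compact exhaustion of $X$, arranged so that each $\supp(f_n)$ projects into a thin slab between two levels of the exhaustion. This single geometric setup will simultaneously yield continuity of $\phi$, $\pi$-compactness of its support, and the requirement that $\pi$ map $\{\phi>0\}$ onto $X$.

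First, since $X$ is second countable locally compact Hausdorff, it is $\sigma$-compact, so pick compacta $L_1\subset L_2\subset\cdots$ with $L_n\subset\operatorname{int}(L_{n+1})$ and $\bigcup_n L_n=X$, using the convention $L_n=\emptyset$ for $n\le 0$. Each $A_n:=L_n\setminus\operatorname{int}(L_{n-1})$ is then compact. For $x\in A_n$ pick $y\in\pi^{-1}(x)$; since $L_n\subset\operatorname{int}(L_{n+1})$ and $L_{n-2}\subset\operatorname{int}(L_{n-1})$, one has $x\in\operatorname{int}(L_{n+1})\setminus L_{n-2}$, so local compactness of $Y$ lets me choose a precompact open neighbourhood $O_x$ of $y$ with $\overline{O_x}\subset\pi^{-1}(\operatorname{int}(L_{n+1})\setminus L_{n-2})$. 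Openness of $\pi$ makes $\{\pi(O_x):x\in A_n\}$ an open cover of $A_n$; extract a finite subcover and let $W_n$ be the union of the corresponding $O_x$'s. Then $W_n$ is open with $\overline{W_n}$ compact, $\pi(W_n)\supset A_n$, and $\pi(\overline{W_n})\subset\operatorname{int}(L_{n+1})\setminus L_{n-2}$.

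Because a second countable locally compact Hausdorff space is metrizable, fix a compatible metric $d$ on $Y$ and set $f_n(y):=\min\bigl(1,d(y,Y\setminus W_n)\bigr)$; then $f_n\in C_c^+(Y)$ with $\{f_n>0\}=W_n$ and $\supp(f_n)=\overline{W_n}$. Define $\phi:=\sum_n f_n$. The crux is the following finiteness observation: for any compact $K\subset X$, fix $m$ with $K\subset L_m$; then for $n\ge m+2$ we have $K\subset L_m\subset L_{n-2}$ while $\pi(\overline{W_n})\cap L_{n-2}=\emptyset$, so $\overline{W_n}\cap\pi^{-1}(K)=\emptyset$ for such $n$. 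Applying this with $K=\pi(\overline{U})$ for a precompact open $U\subset Y$ shows only finitely many $f_n$ are nonzero on $U$, so $\phi$ is continuous; applying it directly shows $\supp(\phi)\cap\pi^{-1}(K)\subset\bigcup_{n\le m+1}\overline{W_n}\cap\pi^{-1}(K)$, a finite union of compacts, hence $\supp(\phi)$ is $\pi$-compact. Finally, each $x\in X$ lies in some $A_n\subset\pi(W_n)$, so some $y\in W_n$ satisfies $\pi(y)=x$ and $\phi(y)\ge f_n(y)>0$; hence $\pi(\{\phi>0\})=X$.

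The main obstacle is the bookkeeping that forces the supports of the $f_n$ to project into ``disjoint-enough'' slabs of $X$; once the shifted-index cover $W_n$ is set up (exploiting the two-step gap between $L_{n+1}$ and $L_{n-2}$ to accommodate that $A_n$ meets the boundary of $L_{n-1}$), continuity, $\pi$-compactness, and the support-surjectivity condition all drop out of a single finiteness statement.
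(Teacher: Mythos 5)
Your proof is correct, but it implements the construction differently from the paper. The paper also writes $\phi$ as a locally finite sum indexed by a countable locally finite cover $\set{V_i}$ of $X$ by precompact open sets, but it takes \emph{arbitrary} bumps $\phi_i\in C_c^+(Y)$ with $\pi(\set{y:\phi_i(y)>0})\supset V_i$ and forces $\pi$-compactness of the support by multiplying each one by the pulled-back partition-of-unity factor, setting $\phi(y)=\sum_i\phi_i(y)\,\alpha_i(\pi(y))$; the factor $\alpha_i\circ\pi$ confines the $i$-th term to $\pi^{-1}(\supp\alpha_i)$, so no care is needed about where $\supp\phi_i$ projects. You instead dispense with the partition of unity and do that confinement by hand: the two-step gap in the exhaustion ($\overline{O_x}\subset\pi^{-1}(\operatorname{int}(L_{n+1})\setminus L_{n-2})$) guarantees $\pi(\overline{W_n})$ misses $L_{n-2}$, and then continuity, $\pi$-compactness, and surjectivity of $\pi$ on $\set{\phi>0}$ all follow from your single finiteness observation. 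The trade-off is that the paper's version is shorter and needs no metric (your $f_n$ could equally be built from Urysohn functions, so metrizability is a convenience, not an essential input), while yours is more self-contained in that it makes explicit the finite-subcover argument that the paper leaves implicit in the existence of the $\phi_i$. One small point you gloss over: to get $\supp\phi\cap\pi^{-1}(K)\subset\bigcup_{n\le m+1}\overline{W_n}$ you need $\supp\phi\subset\bigcup_n\overline{W_n}$, which holds because the family $\set{\overline{W_n}}$ is locally finite (again by your finiteness observation applied to $K=\pi(\overline U)$), so the union of the closures is closed; this is worth a sentence but is not a gap.
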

\begin{proof}
  Let $\mathscr V=\set{V_{i}}$ be a countable, locally finite cover of
  $X$ by pre-compact open sets $V_{i}$.  Let $\sset{\alpha_{i}}$ be a
  partition of unity on $X$ subordinate to $\mathscr V$.  Let
  $\phi_{i}\in C_{c}^{+}(Y)$ be such that $\pi\bigl(\set{y\in
    Y:\phi_{i}>0}\bigr) \supset V_{i}$.  Then we can define
  \begin{equation*}
    \phi(y) =\sum_{i} \phi_{i}(y) \alpha_{i}\bigl(\pi(y)\bigr).
  \end{equation*}
  Since $\mathscr V$ is locally finite, the above sum is finite in a
  neighborhood of any $y\in Y$.  Hence $\phi$ is well-defined and
  continuous. Local finiteness also implies that every compact subset
  of $X$ meets at most finitely many $V_{i}$.  Since
  $\sset{\alpha_{i}}$ is subordinate to $\mathscr V$, it follows that
  $\phi$ has $\pi$-compact support.  If $x\in X$, then there is an $i$
  such that $\alpha_{i}(x)>0$.  Then there is a $y$ such that
  $\phi_{i}(y)>0$ and $\pi(y)=x$.  Hence the result.
\end{proof}

\begin{proof}[Proof of Theorem~\ref{thm-main}]
  Let $Z$ be a $(G,H)$-equivalence.  
  Then the opposite module, $\zop$, is a $(H,G)$ equivalence.
  Therefore, in view of \cite{kmrw:ajm98}*{Proposition~5.2}, it will
  suffice to produce a full $G$-equivariant $s_{\zop}$-system for the
  structure map $s_{\zop}:\zop\to\go$.  Equivalently, we need a full
  equivariant $r_{Z}$-system for the map $r_{Z}:Z\to\go$.\footnote{The
    gymnastics with the opposite space is just to accommodate a
    preference for left-actions.  This has the advantage of making
    closer contact with the literature on $\pi$-systems.}  Hence the
  main Theorem is a consequence of
  Proposition~\ref{prop-jean-proper-map} below.
\end{proof}

The following proposition is even more that what is called for in the
proof of Theorem~\ref{thm-main}: it shows that every proper $G$-space
has a full equivariant $r$-system for the moment map whether the
action is free or not.  It should be noted that pairs $(X,\alpha)$
where $X$ is a proper $G$-space and $\alpha$ and equivariant
$r$-system play an important role in the constructions in
\cite{ren:jot87} and \cite{holren:xx14}.  This makes the assertion
that such $\alpha$'s always exist even more interesting.
\begin{prop}
  \label{prop-jean-proper-map}
  Let $G$ be a locally compact Hausdorff groupoid with a Haar system
  $\sset{\lambda^{u}}_{u\in\go}$.  Suppose that $Z$ is a proper
  $G$-space.  Then there is a full
  equivariant $r_{Z}$-system $\sset{\nu^{u}}_{u\in\go}$ for the moment
  map $r_{Z}:Z\to\go$. 
\end{prop}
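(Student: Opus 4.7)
The plan is to construct $\nu^{u}$ as an iterated integral built from the Haar system $\lambda$, a full $r_{Z}$-system on $Z$ provided by Blanchard, and a Bruhat cut-off function for the $G$-orbit map on $Z$. Because the action is proper, the orbit map $p:Z\to G\backslash Z$ is a continuous open surjection between second countable locally compact Hausdorff spaces, and Lemma~\ref{lem-phi-exists} supplies a $\phi\in C_{c,p}^{+}(Z)$ with $p\bigl(\set{z\in Z:\phi(z)>0}\bigr)=G\backslash Z$. Since the existence of a Haar system makes $r$ open, this combined with properness implies that $r_{Z}:Z\to\go$ is also open, so Lemma~\ref{lem-pi-sys-open} furnishes a full $r_{Z}$-system $\sset{\beta^{u}}_{u\in\go}$.

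I would then try the candidate
\begin{equation*}
  \nu^{u}(f) \;=\; \int_{G}\int_{Z} f(\gamma\cdot z)\,\phi(z)\,d\beta^{s(\gamma)}(z)\,d\lambda^{u}(\gamma)\qquad\bigl(f\in C_{c}(Z)\bigr).
\end{equation*}
The first task is to check that $(\gamma,z)\mapsto f(\gamma z)\phi(z)$ belongs to $C_{c}(G*Z)$: its $z$-projection lies in $\supp\phi\cap p^{-1}\bigl(p(\supp f)\bigr)$, which is compact because $\phi$ has $p$-compact support, and properness of the action then forces $\gamma$ to lie in a compact set for each such $z$. A standard $\pi$-system argument then shows that $F(\gamma):=\int_{Z}f(\gamma z)\phi(z)\,d\beta^{s(\gamma)}(z)$ is continuous with compact support in $G$, so $u\mapsto\nu^{u}(f)=\int_{G}F\,d\lambda^{u}$ is continuous by the defining property of a Haar system. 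Since $r_{Z}(\gamma z)=r(\gamma)=u$ on the domain of integration, the support of $\nu^{u}$ lies in $r_{Z}^{-1}(u)$. Equivariance is then a one-line application of left-invariance of $\lambda$: substituting $\gamma\mapsto\gamma_{0}\gamma$ in the outer integral and using $s(\gamma_{0}\gamma)=s(\gamma)$ yields $\int f(\gamma_{0}z)\,d\nu^{s(\gamma_{0})}(z)=\nu^{r(\gamma_{0})}(f)$.

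The step I expect to require the most care is fullness. Given $z_{0}\in r_{Z}^{-1}(u)$ and $f\in C_{c}^{+}(Z)$ supported near $z_{0}$ with $f(z_{0})>0$, the condition $p(\set{\phi>0})=G\backslash Z$ lets me pick $\gamma_{1}\in G^{u}$ with $\phi(\gamma_{1}^{-1}z_{0})>0$. Then $z\mapsto f(\gamma_{1}z)\phi(z)$ is a non-negative continuous function on $r_{Z}^{-1}(s(\gamma_{1}))$ that is strictly positive at $\gamma_{1}^{-1}z_{0}$, and the full support of $\beta^{s(\gamma_{1})}$ forces $F(\gamma_{1})>0$. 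Continuity of $F$ together with the full support of $\lambda^{u}$ on $G^{u}$ then give $\nu^{u}(f)>0$, which proves fullness. The only other point meriting attention is the openness of $r_{Z}$ needed to invoke Blanchard's lemma; this should follow from properness of the action together with openness of $r$.
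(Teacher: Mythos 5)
Your proposal is correct and follows essentially the same route as the paper: Blanchard's lemma to get a full $r_{Z}$-system $\beta$, a Bruhat cut-off $\phi$ for the orbit map, the averaged measure $\nu^{u}(f)=\int_{G}\int_{Z}f(\gamma\cdot z)\phi(z)\,d\beta^{s(\gamma)}(z)\,d\lambda^{u}(\gamma)$, properness to get compact support of the inner integral, full support of $\beta$ and $\lambda^{u}$ for fullness, and left-invariance of $\lambda$ for equivariance. The only cosmetic difference is your justification that $r_{Z}$ is open (the paper, like the standard definition of a $G$-space it relies on, simply takes the moment map to be an open surjection), which does not affect the argument.
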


\begin{proof}
  Blanchard's Lemma~\ref{lem-pi-sys-open} implies that there is a full
  $r_{Z}$-system $\beta=\sset{\beta^{u}}_{u\in\go}$.  
  The idea of the proof is to use the Haar system on $G$ to
  average this system to create and
  equivariant system.  The technicalities are provided by the next
  lemma. Notice that since $G$ acts
  properly, the orbit map $q:Z\to G\backslash Z$ is a continuous open
  surjection between locally compact Hausdorff spaces.

  \begin{lemma}
    \label{lem-phi-fcns}
    Let $G$, $Z$, $\lambda$ and $\beta$ be as above.
    \begin{enumerate}
    \item If $F\in C_{c}(G\times Z)$, then
      \begin{equation*}
        \Phi(F)(g,u)=\int_{Z} F(g,z)\,d\beta^{u}(z)
      \end{equation*}
defines an element of $C_{c}(G\times\go)$.
\item If $f\in C_{c}(Z)$ and $\phi\in C_{c,q}(Z)$, then
  \begin{equation*}
    \Psi_{\phi}(f)(g)=\int_{Z} f(g\cdot z)\phi(z) \,d\beta^{s(g)}(z)
  \end{equation*}
defines an element of $C_{c}(G)$.
    \end{enumerate}
  \end{lemma}
  \begin{proof}
    (a) This is straightforward if $F(g,z)=f(g)\phi(z)$ with $f\in
    C_{c}(G)$ and $\phi\in C_{c}(Z)$.  But we can approximate $F$ in
    the inductive limit topology with sums of such functions.

    (b) Let $L=\supp \phi\cap q^{-1}\bigl(q(\supp f)\bigr)$. By assumption on
    $\phi$, $L$ is compact.  Since $G$ acts properly on $Z$, the set
    \begin{equation*}
      P(\supp f,L)=\set{g\in G:g\cdot L\cap \supp f\not=\emptyset}
    \end{equation*}
is compact.  It follows that
    \begin{equation*}
      F(g,z)=f(g\cdot z)\phi(z)
    \end{equation*}
defines an element of $C_{c}(G\times Z)$.  Then
\begin{equation*}
  \Psi_{\phi}(f)(g)=\Phi(F)(g,s(g)).
\end{equation*}
The assertion follows.
  \end{proof}

  Using Lemma~\ref{lem-phi-exists}, we fix $\phi\in
  C_{c,q}^{+}(Z)$ such that $q\bigl(\set{z:\phi(z)>0}
  \bigr)=G\backslash Z$. Then we define a Radon measure on $C_{c}(Z)$ by
\begin{equation}
  \label{eq:1}
\nu^{u}(f)=\int_{G}\int_{Z} f(g\cdot z)\phi(z)\,d\beta^{s(g)}(z)
\,d\lambda^{u}(g) =\int_{G}\Psi_{\phi}(f)(g) \,d\lambda^{u}(g).
\end{equation}

Since $\lambda$ is a Haar system and $\Psi_{\phi}(f)\in C_{c}(G)$, we
see immediately that
\begin{equation*}
  u\mapsto \nu^{u}(f)
\end{equation*}
is continuous.

Clearly, $\supp \nu^{u}\subset r_{Z}^{-1}(u)$.  Suppose $r_{Z}(w)=u$
and $f\in C_{c}^{+}(Z)$ is such that $f(w)>0$.
Then there is a $z'\in\set{z:\phi(z)>0}$ such that
$q(z')=q(w)$.  Hence there is a $g\in
G$ such that $g\cdot z'=w$.  Note that $r_{Z}(z')=s(g)$ and
$r(g)=r_{Z}(g\cdot z)=r_{Z}(w)=u$.  Since $\beta^{s(g)}$ has full
support and since everything in sight is continuous and non-negative,
\begin{equation*}
  \Psi_{\phi}(g) =\int_{Z} f(g\cdot z)\phi(z)\,\beta^{s(g)}(z)>0.
\end{equation*}
Hence $\nu^{u}(f)>0$ and we have
\begin{equation*}
  \supp \nu^{u} = r_{Z}^{-1}(u).
\end{equation*}
 
Hence to complete the proof of the theorem, we just need to establish
equivariance.  But
\begin{align*}
  \int_{Z} f(g'\cdot z)\,d\nu^{s(g')}(z) &= \int_{G}\int_{Z}
  f(g'g\cdot z)\phi(z)\,d\beta^{s(g)}(z)\,d\lambda^{s(g')}(g) \\
&= \int_{G}\Psi_{\phi}(f)(g'g)\,d\lambda^{s(g')}(g) \\
\intertext{which, since $\lambda$ is a Haar system on $G$, is}
&= \int_{G}\Psi_{\phi}(g) \,d\lambda^{r(g')}(g) \\
&=\int_{G}\int_{Z} f(g\cdot z)\phi(z)\,d\beta^{s(g)}(z)
\,d\lambda^{r(g')}(g) \\
\intertext{which, since $g'\cdot s(g')=r(g')$, is}
&=\int_{Z}f(z)\,d\nu^{g'\cdot s(g')}(z).
\end{align*}
This completes the proof.
\end{proof}

Proposition~\ref{prop-jean-proper-map} is interesting even for a group
action.  The result itself is no doubt known to experts, but is
amusing none-the-less.
\begin{cor}
  \label{cor-jean-group}
  Suppose that $G$ is a locally compact group acting properly on a
  space $X$.  Then $X$ has at least one invariant measure with full
  support. 
\end{cor}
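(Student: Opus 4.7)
The plan is simply to specialize Proposition~\ref{prop-jean-proper-map} to the case where the groupoid is a group. Regard the locally compact group $G$ as a groupoid whose unit space $\go$ is a single point $\ast$; then a Haar system on $G$ is nothing but a (left) Haar measure, which we know exists. The proper $G$-space $X$ plays the role of $Z$, and the moment map $r_{Z}:Z\to\go$ becomes the constant map $r_{X}:X\to\{\ast\}$.

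With these identifications, an $r_{X}$-system $\sset{\nu^{u}}_{u\in\go}$ degenerates to a single Radon measure $\nu=\nu^{\ast}$ on $X$. Fullness translates to
\begin{equation*}
  \supp \nu = r_{X}^{-1}(\ast) = X,
\end{equation*}
while equivariance becomes the statement that $g\cdot \nu=\nu$ for every $g\in G$, i.e.\ invariance in the usual group-theoretic sense. Thus applying Proposition~\ref{prop-jean-proper-map} to this setup immediately yields a nonzero $G$-invariant Radon measure on $X$ with full support.

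The only point that deserves a brief word is the standing second-countability hypothesis under which Blanchard's lemma (and hence the proposition) was invoked; in the group setting this means one should read the corollary as covering second countable locally compact $G$ acting properly on a second countable locally compact Hausdorff $X$, which is the natural framework. There is no essential obstacle beyond this: the whole content of the corollary is a dictionary lookup between the groupoid-theoretic notions of ``full equivariant $r$-system'' and the classical notions of ``invariant measure with full support''.
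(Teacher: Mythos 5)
Your proposal is correct and is exactly the argument the paper intends: the corollary is stated as an immediate specialization of Proposition~\ref{prop-jean-proper-map} to a group viewed as a groupoid with one-point unit space, where a full equivariant $r$-system is precisely an invariant measure of full support. Your added remark about the implicit second-countability hypothesis (needed for Blanchard's lemma in the proof of the proposition) is a fair and accurate caveat.
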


\section{Examples and Comments}
\label{sec:exampl-comm-future}

As pointed out in the introduction, any \'etale groupoid $G$ has a
Haar system.  As a consequence of Theorem~\ref{thm-main}, any second
countable groupoid equivalent to $G$ has a Haar system (provided $G$
is second countable).  In this section, I want to look at some
additional examples.  In some cases it is possible and enlightening to
describe the Haar system in finer detail.

\subsection{Proper Principal Groupoids}
\label{sec:prop-princ-group}

Recall that $G$ is called principal if the natural action of $G$ on
$\go$ given by $x\cdot s(x)=r(x)$ is free. We call $G$ proper if this
action is proper in that $(x,s(x))\mapsto (r(x),s(x))$ is proper from
$G\times \go \to\go\times\go$.  If $G$ is a proper principal groupoid
with open range and source maps, 
then the orbit space $G\backslash \go$ is locally compact Hausdorff,
and it is straightforward to check that
$\go$ implements an equivalence between $G$ and the orbit space
$G\backslash \go$. Since the orbit space clearly has a Haar system,
the following is a simple corollary of
Theorem~\ref{thm-main}.

\begin{prop}[Blanchard]
  \label{prop-proper-prin}
  Every second countable proper principle groupoid with open range and
  source maps has a Haar system.
\end{prop}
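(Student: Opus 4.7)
The plan is to apply Theorem~\ref{thm-main}, taking the role of the groupoid carrying a known Haar system to be the orbit space $G\backslash\go$, viewed as a co-trivial groupoid in which every element is a unit. Such a groupoid possesses the trivial Haar system $\lambda^{[u]}=\delta_{[u]}$, so the content of the proof is reduced to producing a groupoid equivalence between $G$ and $G\backslash\go$ in the sense of \cite{mrw:jot87}.

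The candidate equivalence is $Z=\go$ itself. First I would record that $G\backslash\go$ is second countable, locally compact, and Hausdorff: second countability passes from $\go$ to the quotient; Hausdorffness follows because the orbit relation is the image of the proper map $(r,s):G\to\go\times\go$ and is therefore closed; local compactness follows once we know the quotient map $q:\go\to G\backslash\go$ is open. That openness in turn comes from openness of the range and source maps, since for open $U\subset\go$ one has $q^{-1}\bigl(q(U)\bigr)=r(s^{-1}(U))$, which is open.

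Next I would verify the axioms for $\go$ to be a $(G,G\backslash\go)$-equivalence. The left $G$-action $x\cdot s(x)=r(x)$ is free by the principal hypothesis and proper by the proper hypothesis; the right action of the co-trivial groupoid $G\backslash\go$ is simply the trivial action along $q$. The moment maps are $\id:\go\to\go$ on the left and $q:\go\to G\backslash\go$ on the right, both open surjections. The remaining compatibility conditions---that the right moment map induces a bijection between left $G$-orbits on $\go$ and units of $G\backslash\go$, and dually that the left moment map induces a bijection between right $(G\backslash\go)$-orbits and units of $G$---hold tautologically, the first by the definition of $G\backslash\go$ and the second because the right orbits are singletons and the left moment map is the identity.

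With the equivalence established, Theorem~\ref{thm-main} applied in the direction from $G\backslash\go$ (with its trivial Haar system) to $G$ yields a Haar system on $G$. The only real substance here is the verification that this somewhat degenerate situation, in which one partner of the equivalence is a space, satisfies the definition of a groupoid equivalence; this is the step I would take most care with, though each check is routine.
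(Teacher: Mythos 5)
Your proposal is correct and is essentially the paper's own argument: the paper likewise observes that $\go$ implements a $(G,G\backslash\go)$-equivalence, that the orbit space (as a groupoid of units) trivially has a Haar system, and then invokes Theorem~\ref{thm-main}. The verifications you flag as needing care (openness of $q$, Hausdorffness of the quotient, and the equivalence axioms) are exactly the ones the paper dismisses as ``straightforward to check,'' and your sketches of them are sound.
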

\begin{remark}
  \label{rem-blanchard}
  If $G$ is a second countable proper principle groupoid with open
  range and source maps, then the orbit map $q:\go\to G\backslash \go$
  sending $u$ to $\dot u$ is continuous and open.  Hence Blanchard's
  Lemma~\ref{lem-pi-sys-open} implies there is a full $q$-system
  $\sset{\beta^{\dot u}}_{\dot u\in G\backslash \go}$.  It is not hard
  to check that $\lambda^{u}=\delta_{u}\times \beta^{\dot u}$ is a
  Haar system for
  \begin{equation*}
    G_{q}:=\set{(u,v)\in\go\times\go:\dot u = \dot v}.
  \end{equation*}
Since $x\mapsto (r(x),s(x))$ is a groupoid isomorphism of $G$ and
$G_{q}$, we get an elementary description for a Haar system on $G$.
\end{remark}

While there certainly exist groupoids that fail to have open range and
source maps --- and hence cannot have Haar systems --- most of these
examples are far from proper and principal.  In fact the examples I've seen are
all group bundles which are as a far from principal as possible.  This
poses an interesting question.

\begin{question}
  Must a second countable, locally compact, proper principal groupoid have
  open range and source maps?
\end{question}

\subsection{Transitive Groupoids}
\label{sec:transitive-groupoids}

Recall that a groupoid is called \emph{transitive} if the natural
action of $G$ on $\go$ given by $x\cdot s(x):=r(x)$ is transitive.  If
$G$ is transitive and has open range and source maps, then $G$ is
equivalent to any of its stability groups $H=G_{v}^{v}=\set{x\in
  G:r(x)=u=s(x)}$ for $v\in\go$; the equivalence is given by $G_{v}$
with the obvious left $G$-action and right $H$ action.  Second
countability is required to see that the restriction of the range map
to $G_{v}$ onto $\go$ is open.\footnote{Proving the openness of $r\restr{G_{v}}$ is nontrivial.  It
  follows from \cite{ram:jfa90}*{Theorem~2.1} or Theorems 2.2A~and
  2.2B in \cite{mrw:jot87}.   The assertion and
  equivalence fail without the 
second countability assumption as observed in
\cite{mrw:jot87}*{Example~2.2}.

It should be noted that in both
  \cite{ram:jfa90} and \cite{mrw:jot87} openness of the range and source maps on
  a topological groupoid is a standing assumption.}  Since locally
compact groups always have a Haar measure, the following is an
immediate consequence of Theorem~\ref{thm-main}.  (Similar assertions
can be found in \cite{sed:pria76}.)
\begin{prop}[Seda]
  \label{prop-trans}
  If $G$ is a second countable, locally compact transitive groupoid
  with open range and source maps,
  then $G$ has a Haar system.
\end{prop}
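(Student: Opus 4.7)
The plan is to derive Proposition~\ref{prop-trans} directly from the main theorem by exhibiting, for any chosen unit $v\in\go$, an explicit equivalence between $G$ and the stability group $H=G_{v}^{v}$. Since $H$ is a locally compact group, it carries a Haar measure, and a Haar measure on a group (viewed as a groupoid over a one-point unit space) is a Haar system. Thus, once $G$ and $H$ are shown to be equivalent in the sense of \cite{mrw:jot87}, Theorem~\ref{thm-main} immediately yields a Haar system on $G$.

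Concretely, fix $v\in\go$ and set $Z=G_{v}=s^{-1}(v)$. Because $s$ is continuous, $Z$ is a locally compact Hausdorff space. I would equip $Z$ with the left $G$-action by multiplication, using $r\restr{Z}:Z\to\go$ as the moment map, and with the right $H$-action by multiplication, using the constant map $Z\to\{v\}$ as the moment map. The first steps are routine verifications: the two actions commute, each is free (left freeness uses that if $\gamma x=x$ then $\gamma=r(x)$, a unit; right freeness is analogous), the right $H$-action is automatically proper since $H$ is a group acting by multiplication on a closed subset of itself's orbit, and the $G$-action is proper because $(\gamma,x)\mapsto(\gamma x,x)$ has closed range inside $Z\times Z$ and any limit argument reduces to the fact that $r,s$ are continuous. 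One also checks that right $H$-orbits are exactly the fibers of $r\restr{Z}$ and left $G$-orbits are all of $Z$ (by transitivity), so the orbit spaces are $\go$ and the one-point space $\{v\}$, respectively.

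The main obstacle is precisely the openness requirement in the definition of an equivalence: one must show that the moment map $r\restr{Z}:Z\to\go$ is open and surjective. Surjectivity is immediate from transitivity of $G$ on $\go$. Openness, however, is nontrivial and is the only place where second countability is needed. As noted in the footnote, this can be established by invoking \cite{ram:jfa90}*{Theorem~2.1} or \cite{mrw:jot87}*{Theorems~2.2A and~2.2B}: once $r,s$ are known to be open on $G$ and the space is second countable, the restriction of $r$ to an orbit closure (here all of $\go$) is open. I would quote this rather than re-prove it. The other moment map, the constant map $Z\to\{v\}$, is trivially open.

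With the equivalence in hand, Theorem~\ref{thm-main} applied to the equivalent groupoids $H$ and $G$, together with the existence of Haar measure on $H$, produces a Haar system on $G$, completing the proof.
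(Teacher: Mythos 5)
Your argument is correct and is essentially identical to the paper's: the paper also takes $Z=G_{v}$ as a $(G,G_{v}^{v})$-equivalence, cites \cite{ram:jfa90}*{Theorem~2.1} or \cite{mrw:jot87}*{Theorems~2.2A and 2.2B} for the openness of $r\restr{G_{v}}$ (the one nontrivial point, requiring second countability), and then invokes Theorem~\ref{thm-main} together with the existence of Haar measure on the stability group. No gaps.
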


% While there certainly exist groupoids that fail to have open range and
% source maps --- and hence cannot have Haar systems --- most of these
% examples are far from transitive.  In fact the examples I've seen are
% all group bundles which are as a far from transitive as possible.  This
% poses an interesting question.

As before, I don't know the answer to the following.

\begin{question}
  Must a second countable, locally compact, transitive groupoid have
  open range and source maps?
\end{question}

\subsection{Blowing Up the Unit Space}
\label{sec:blowing-up-unit}

While there are myriad ways groupoid
equivalences arise in applications, one standard technique
deserves special mention (see 
\cite{txlg:acens12} for example).  Suppose that $G$ is a second
countable locally compact groupoid with a Haar system (or at least
open range and source maps).  Let $f:Z\to \go$ be a continuous and
open map.  Then we can form the groupoid
\begin{equation}
  \label{eq:2}
  G[Z]=\set{(z,g,w)\in Z\times G\times Z: \text{$f(z)=r(g)$ and
      $s(g)=f(w)$}} .
\end{equation}
(The operations are as expected: $(z',g',z)(z,g,w)=(z',g'g,w)$ and
$(z,g,w)^{-1} = (w,g^{-1},z)$.)  The idea being that we use $f$ to
``blow-up'' the unit space of $G$ to all of $Z$.  If $\phi:G[Z]\to G$
is the homormorphism $(z,g,w)\mapsto g$, then we get a
$(G[Z],G)$-equivalence given by ``the graph of $\phi$'' (see
\cite{kmrw:ajm98}*{\S6}): 
\begin{equation*}
W = \set{(z,g)\in Z\times G:f(z)=r(g)}.
\end{equation*}
The left $G[Z]$-action is given by $(z,g,w)\cdot (w,g')=(z,gg')$ and
the right $G$-action by $(w,g')\cdot g = (w,g'g)$.  The openness of
the range map for $G$ are required to see that the structure map
$r_{W}:W\to Z$ is open, while the openness of $f$ is required to see
that $s_{W}:W\to \go$ is open.  Assuming $G$ has a Haar system and $Z$
is second countable, Theorem~\ref{thm-main} implies $G[Z]$ has a Haar
system.  However in this case we can do a bit better and write down a
tidy formula for the Haar system on the blow-up.  We still require
Blanchard's Lemma~\ref{lem-pi-sys-open} that there is a full
$f$-system for any continuous open map $f:Z\to \go$ (provided that $Z$
is second countable).

\begin{prop}
  \label{prop-blow-up-haar}
  Suppose that $G$ is a locally compact Hausdorff groupoid with a Haar
  system $\sset{\lambda^{u}}_{u\in\go}$, and that $Z$ is second
  countable.  Let $f:Z\to\go$ be a continuous open map, and let
  \begin{equation*}
    G[Z]=\set{(w,g,z)\in Z\times G\times Z: \text{$f(w)=r(g)$ and $s(g)=f(z)$}}
  \end{equation*}
be the ``blow-up'' of $G$ by $f$.  If $\sset{\beta^{u}}_{u\in\go}$ is
a full $f$-system, then we get a Haar system
$\sset{\kappa^{z}}_{z\in Z}$ on $G[Z]$ given by
\begin{equation*}
  \kappa^{z}(f)=\int_{G}\int_{Z}
  f(z,g,w)\,d\beta^{s(g)}(w)\,d\lambda^{f(z)}(g). 
\end{equation*}
\end{prop}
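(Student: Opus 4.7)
The plan is to verify that $\sset{\kappa^{z}}_{z\in Z}$ satisfies (i) well-definedness with $\supp\kappa^{z}\subset r_{G[Z]}^{-1}(z)$, (ii) continuity in $z$, (iii) full support, and (iv) left-invariance.

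For (i), note that $G[Z]$ is closed in $Z\times G\times Z$, so a given $F\in C_{c}(G[Z])$ extends (via Tietze, followed by multiplication by a compactly-supported Urysohn cutoff) to $\tilde F\in C_{c}(Z\times G\times Z)$. The iterated integral against $\tilde F$ defines $\kappa^{z}(F)$ unambiguously, since $\beta^{s(g)}$ is supported on $f^{-1}(s(g))$ and $\lambda^{f(z)}$ on $G^{f(z)}$, so only values of $\tilde F$ on triples $(z,g,w)$ with $r(g)=f(z)$ and $s(g)=f(w)$ — i.e., on $G[Z]$ — are seen. The same observation gives $\supp\kappa^{z}\subset r_{G[Z]}^{-1}(z)$.

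For (ii) I would repeat the approximation strategy of Lemma~\ref{lem-phi-fcns}(a) twice. For $\tilde F$ of the separated form $\alpha(z)h(g)\psi(w)$ with $\alpha,\psi\in C_{c}(Z)$ and $h\in C_{c}(G)$, the inner integral is $\alpha(z)h(g)\beta^{s(g)}(\psi)$, which lies in $C_{c}(Z\times G)$ by the $f$-system property of $\beta$ and continuity of $s$. A general $\tilde F\in C_{c}(Z\times G\times Z)$ is approximated in the inductive-limit topology by finite sums of such products, whence $(z,g)\mapsto\int_{Z}\tilde F(z,g,w)\,d\beta^{s(g)}(w)$ lies in $C_{c}(Z\times G)$. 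A second application of the same reasoning, now using Haar-system continuity of $\lambda$ together with continuity of $f$, yields continuity of $z\mapsto \kappa^{z}(F)$.

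For (iii), given $x_{0}=(z,g_{0},w_{0})\in r_{G[Z]}^{-1}(z)$ and $F\in C_{c}^{+}(G[Z])$ with $F(x_{0})>0$, the inner integrand in $w$ is nonnegative, continuous, and positive at $w_{0}\in\supp\beta^{s(g_{0})}$, so the inner integral is strictly positive at $g=g_{0}$; then $g_{0}\in\supp\lambda^{f(z)}=G^{f(z)}$ forces $\kappa^{z}(F)>0$. For (iv), given $\gamma=(w',g',z)\in G[Z]$, the product is $\gamma\cdot(z,g,w)=(w',g'g,w)$; substituting $h=g'g$ in the outer integral, invoking left-invariance of $\lambda$ (so $\lambda^{s(g')}=\lambda^{f(z)}$ translates to $\lambda^{r(g')}=\lambda^{f(w')}$), and observing $s(g'^{-1}h)=s(h)$, the expression collapses directly to $\kappa^{w'}(F)=\kappa^{r(\gamma)}(F)$.

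The main obstacle I anticipate is step (ii): two distinct systems ($\beta$ on $Z$ and $\lambda$ on $G$) are nested, and a pullback along $f$ appears in the outer index, so the argument must transport compact support and continuity through both stages. The approximation-by-products device of Lemma~\ref{lem-phi-fcns}(a), applied twice in sequence, is exactly what allows each stage to feed a $C_{c}$-function of the remaining variables into the next.
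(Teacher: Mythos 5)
The paper offers no proof of this proposition (it is dismissed as ``relatively straightforward''), so there is no argument to diverge from; your proof is correct and is clearly the intended one, reusing the inductive-limit approximation device of Lemma~\ref{lem-phi-fcns}(a) for continuity, fullness of $\beta$ and of $\lambda$ for full support, and left invariance of $\lambda$ together with $s(g'g)=s(g)$ for invariance. The only details worth making explicit are that the Tietze extension should be taken nonnegative in step (iii) --- or, equivalently, that the inner integral is automatically nonnegative on $\supp\lambda^{f(z)}=G^{f(z)}$ because the measures only see points of $G[Z]$ --- and that passing the inductive-limit approximation through the iterated integral uses the bounds $\sup_{z\in K}\lambda^{f(z)}(K_{2})<\infty$ and $\sup_{u}\beta^{u}(K_{3})<\infty$ (for $u$ in a compact set), which follow from continuity of the two systems.
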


The proof is relatively straightforward.

\subsection{Imprimitivity Groupoids}
\label{sec:impr-group}

If $X$ is a free and proper right $G$-space, then assuming $G$ has
open range and source maps, we can form the imprimitivity groupoid
$G^{Z}$ as
in \cite{muhwil:plms395}*{pp.~119+}.  Specifically we let $G^{Z}$ be
the quotient of $X*_{s}X=\set{(x,y)\in X\times X:s(x)=s(y)}$ by the
diagonal right $G$-action.  Then $G^{Z}$ is a groupoid with respect to
the operations $[x,y][y,z]=[x,z]$ and $[x,y]^{-1}=[y,z]$.  Furthermore
$X$ implements an equivalence between $G^{Z}$ and $G$.  Again we can
apply Theorem~\ref{thm-main}.
\begin{prop}
  \label{prop-imprimitivity}
  Suppose that $G$ is a second countable, locally compact Hausdorff groupoid
  with a Haar system.  Let $X$ be a free and proper right $G$-space.
  Then the imprimitivity groupoid $G^{Z}$ has a Haar system.
\end{prop}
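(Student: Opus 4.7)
The plan is to invoke Theorem~\ref{thm-main} directly. The discussion immediately preceding the proposition already records (with details in \cite{muhwil:plms395}) that $X$ implements an equivalence between the imprimitivity groupoid $G^{Z}$ and $G$. Since $G$ has a Haar system by hypothesis, the only genuine task is to verify that $G^{Z}$ satisfies the structural hypotheses of Theorem~\ref{thm-main}---namely that it is a second countable, locally compact, Hausdorff groupoid.

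First I would recall the construction of $G^{Z} = (X *_{s} X)/G$ from \cite{muhwil:plms395}*{pp.~119+}. Because the right $G$-action on $X$ is free and proper, so is the diagonal right $G$-action on $X *_{s} X$, and hence the orbit space $G^{Z}$ is locally compact Hausdorff; the groupoid operations $[x,y][y,z]=[x,z]$ and $[x,y]^{-1}=[y,x]$ are descents of continuous maps from $X *_{s} X$. Second countability of $G^{Z}$ follows from second countability of $X$ (which is implicit throughout the paper, since the arguments rely on Blanchard's Lemma~\ref{lem-pi-sys-open}): $X *_{s} X$ is then second countable, and the orbit map for a proper groupoid action is open, so the quotient $G^{Z}$ is second countable as well. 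Openness of the range and source maps of $G^{Z}$ is inherited from openness of the moment maps on $X$---the right moment map $s:X\to\go$ is open because $G$ has a Haar system, and the left moment map $r_{X}:X\to X/G$ is the orbit map of a proper action.

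With $G^{Z}$ identified as a second countable locally compact Hausdorff groupoid equivalent to $G$ via $X$, Theorem~\ref{thm-main} delivers a Haar system on $G^{Z}$. The main obstacle is not conceptual but bookkeeping: matching the left/right conventions of \cite{muhwil:plms395} and \cite{mrw:jot87} (the proof of Theorem~\ref{thm-main} itself handled a similar issue via the opposite-space trick), and confirming that ``free and proper right $G$-space'' is understood to carry the standing second countability hypothesis on $X$. Once this is accepted, the proposition is an immediate corollary of the main theorem.
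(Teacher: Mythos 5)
Your proposal matches the paper's own argument: the paper likewise treats this proposition as an immediate application of Theorem~\ref{thm-main}, using the fact that $X$ implements an equivalence between $G^{Z}$ and $G$. The extra structural verifications you sketch (local compactness and second countability of the quotient, openness of the moment maps) are exactly the details the paper leaves implicit, so the approach is the same, just spelled out more fully.
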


\begin{remark}
  \label{rem-kmrw}
  In \cite{kmrw:ajm98}*{\S\S 9-10}, we associated a group
  $\operatorname{Ext}(G,\T)$ to any second countable, locally compact
  Hausdorff groupoid $G$.  In \cite{kmrw:ajm98}*{Theorem~10.1} we
  showed that $\operatorname{Ext}(G,\T)$ was naturally isomorphic to
  the Brauer group $\operatorname{Br}(G)$.  The definition of
  $\operatorname{Ext}(G,\T)$ required we consider the space
  $\mathcal{P}(G)$ of all free and proper right $G$-spaces $X$
  \emph{such that $G^{X}$ has a Haar system}.  In view of
  Proposition~\ref{prop-imprimitivity}, $\mathcal{P}(G)$ becomes simply
  the collection of all free and proper right $G$-spaces.
\end{remark}

%%%%%%%%%%%%%%%%%%%%%%%%%%%%%%%%%%%%%%%%%%%%%%%%%%%%
%%%% End Matter
%%%%%%%%%%%%%%%%%%%%%%%%%%%%%%%%%%%%%%%%%%%%%%%%%%%%%
\bibliographystyle{amsxport}
%
% \bibliography{references-nov01}

% \end{document}

\def\noopsort#1{}\def\cprime{$'$} \def\sp{^}
% \bib, bibdiv, biblist are defined by the amsrefs package.
\begin{bibdiv}
\begin{biblist}

\bib{anaren:amenable00}{book}{
      author={Anantharaman-Delaroche, Claire},
      author={Renault, Jean},
       title={Amenable groupoids},
      series={Monographies de L'Enseignement Math\'ematique [Monographs of
  L'Enseignement Math\'ematique]},
   publisher={L'Enseignement Math\'ematique},
     address={Geneva},
        date={2000},
      volume={36},
        ISBN={2-940264-01-5},
        note={With a foreword by Georges Skandalis and Appendix B by E.
  Germain},
      review={\MR{MR1799683 (2001m:22005)}},
}

\bib{bla:bsmf96}{article}{
      author={Blanchard, {\'E}tienne},
       title={D\'eformations de {$C\sp *$}-alg\`ebres de {H}opf},
        date={1996},
        ISSN={0037-9484},
     journal={Bull. Soc. Math. France},
      volume={124},
      number={1},
       pages={141\ndash 215},
      review={\MR{97f:46092}},
}

\bib{bou:integrationII04}{book}{
      author={Bourbaki, Nicolas},
       title={Integration. {II}. {C}hapters 7--9},
      series={Elements of Mathematics (Berlin)},
   publisher={Springer-Verlag, Berlin},
        date={2004},
        ISBN={3-540-20585-3},
        note={Translated from the 1963 and 1969 French originals by Sterling K.
  Berberian},
      review={\MR{2098271 (2005f:28001)}},
}

\bib{holren:xx14}{unpublished}{
      author={Holkar, Rohit~Dilip},
      author={Renault, Jean~N.},
       title={Hypergroupoids and {$C^{*}$}-algebras},
        date={2014},
        note={(arXiv.math.OA.1403.3424)},
}

\bib{kmrw:ajm98}{article}{
      author={Kumjian, Alexander},
      author={Muhly, Paul~S.},
      author={Renault, Jean~N.},
      author={Williams, Dana~P.},
       title={The {B}rauer group of a locally compact groupoid},
        date={1998},
        ISSN={0002-9327},
     journal={Amer. J. Math.},
      volume={120},
      number={5},
       pages={901\ndash 954},
      review={\MR{2000b:46122}},
}

\bib{mic:am56}{article}{
      author={Michael, Ernest},
       title={Continuous selections. {I}},
        date={1956},
        ISSN={0003-486X},
     journal={Ann. of Math. (2)},
      volume={63},
       pages={361\ndash 382},
      review={\MR{MR0077107 (17,990e)}},
}

\bib{mrw:jot87}{article}{
      author={Muhly, Paul~S.},
      author={Renault, Jean~N.},
      author={Williams, Dana~P.},
       title={Equivalence and isomorphism for groupoid {$C^*$}-algebras},
        date={1987},
        ISSN={0379-4024},
     journal={J. Operator Theory},
      volume={17},
      number={1},
       pages={3\ndash 22},
      review={\MR{88h:46123}},
}

\bib{muhwil:plms395}{article}{
      author={Muhly, Paul~S.},
      author={Williams, Dana~P.},
       title={Groupoid cohomology and the {D}ixmier-{D}ouady class},
        date={1995},
     journal={Proc. London Math. Soc. (3)},
       pages={109\ndash 134},
}

\bib{pat:groupoids99}{book}{
      author={Paterson, Alan L.~T.},
       title={Groupoids, inverse semigroups, and their operator algebras},
      series={Progress in Mathematics},
   publisher={Birkh\"auser Boston Inc.},
     address={Boston, MA},
        date={1999},
      volume={170},
        ISBN={0-8176-4051-7},
      review={\MR{MR1724106 (2001a:22003)}},
}

\bib{ram:jfa90}{article}{
      author={Ramsay, Arlan},
       title={The {M}ackey-{G}limm dichotomy for foliations and other {P}olish
  groupoids},
        date={1990},
        ISSN={0022-1236},
     journal={J. Funct. Anal.},
      volume={94},
      number={2},
       pages={358\ndash 374},
      review={\MR{MR1081649 (93a:46124)}},
}

\bib{ren:groupoid}{book}{
      author={Renault, Jean},
       title={A groupoid approach to {\cs}-algebras},
      series={Lecture Notes in Mathematics},
   publisher={Springer-Verlag},
     address={New York},
        date={1980},
      volume={793},
}

\bib{ren:jot87}{article}{
      author={Renault, Jean},
       title={Repr\'esentation des produits crois\'es d'alg\`ebres de
  groupo\"\i des},
        date={1987},
        ISSN={0379-4024},
     journal={J. Operator Theory},
      volume={18},
      number={1},
       pages={67\ndash 97},
      review={\MR{MR912813 (89g:46108)}},
}

\bib{ren:xx14}{unpublished}{
      author={Renault, Jean~N.},
       title={Induced representations and hypergroupoids},
        date={2014},
        note={(arXiv.math.OA.1402.0072)},
}

\bib{sed:pria76}{article}{
      author={Seda, Anthony~Karel},
       title={Haar measures for groupoids},
        date={1976},
     journal={Proc. Roy. Irish Acad. Sect. A},
      volume={76},
      number={5},
       pages={25\ndash 36},
      review={\MR{0427598 (55 \#629)}},
}

\bib{sed:pams86}{article}{
      author={Seda, Anthony~Karel},
       title={On the continuity of {H}aar measure on topological groupoids},
        date={1986},
        ISSN={0002-9939},
     journal={Proc. Amer. Math. Soc.},
      volume={96},
      number={1},
       pages={115\ndash 120},
         url={http://dx.doi.org/10.2307/2045664},
      review={\MR{813822 (87m:46146)}},
}

\bib{tu:doc04}{article}{
      author={Tu, Jean-Louis},
       title={Non-{H}ausdorff groupoids, proper actions and {$K$}-theory},
        date={2004},
        ISSN={1431-0635},
     journal={Doc. Math.},
      volume={9},
       pages={565\ndash 597 (electronic)},
      review={\MR{MR2117427 (2005h:22004)}},
}

\bib{txlg:acens12}{article}{
      author={Tu, Jean-Louis},
      author={Xu, Ping},
      author={Laurent-Gengoux, Camille},
       title={Twisted {$K$}-theory of differentiable stacks},
        date={2004},
        ISSN={0012-9593},
     journal={Ann. Sci. \'Ecole Norm. Sup. (4)},
      volume={37},
      number={6},
       pages={841\ndash 910},
         url={http://dx.doi.org/10.1016/j.ansens.2004.10.002},
      review={\MR{2119241 (2005k:58037)}},
}

\bib{wil:crossed}{book}{
      author={Williams, Dana~P.},
       title={Crossed products of {$C{\sp \ast}$}-algebras},
      series={Mathematical Surveys and Monographs},
   publisher={American Mathematical Society},
     address={Providence, RI},
        date={2007},
      volume={134},
        ISBN={978-0-8218-4242-3; 0-8218-4242-0},
      review={\MR{MR2288954 (2007m:46003)}},
}

\end{biblist}
\end{bibdiv}

\end{document}